\newcommand{\N}{\mathbb N}
\newcommand{\E}{\mathbb E}
\newcommand{\hier}[3][X]{#1^{(#2)}_{#3}}
\newcommand{\half}{\frac{1}{2}}
\begin{document}
\theoremstyle{plain}
\newtheorem{thm}{Theorem}
\newtheorem{lem}[thm]{Lemma}
\newtheorem{cor}[thm]{Corollary}
\newtheorem{prop}[thm]{Proposition}

\title{Random walk attachment graphs}
\author{Chris Cannings and Jonathan Jordan\\
 University of Sheffield}
\maketitle
\begin{abstract}We consider the random walk attachment graph introduced by Saram\"{a}ki and Kaski and proposed as a mechanism to explain how behaviour similar to preferential attachment may appear requiring only local knowledge.  We show that if the length of the random walk is fixed then the resulting graphs can have properties significantly different from those of preferential attachment graphs, and in particular that in the case where the random walks are of length $1$ and each new vertex attaches to a single existing vertex the proportion of vertices which have degree $1$ tends to $1$, in contrast to preferential attachment models.\\ AMS 2010 Subject Classification: Primary 05C82. \\ Key words and phrases:random graphs; preferential
attachment; random walk.\end{abstract}

\section{Introduction}

There is currently great interest in the preferential attachment
model of network growth, usually called the Barab\'{a}si-Albert
\cite{ba,scalefreedefs} model, though it dates back at least to Yule \cite{yule}, and was
discussed also by Simon \cite{simon}. In the simplest version of this an
existing graph is incremented at each stage by adding a
single new vertex which then attaches to a single pre-existing
vertex; this latter is chosen from amongst those of the
pre-existing graph with probability proportional to the degree of
that vertex. In the Barab\'{a}si-Albert model the new vertex will
connect to $m$ vertices, where $m$ is fixed and is a parameter of the model, but here we only consider the
case $m=1$.  One of the best known properties of the model is that
it produces a power law degree distribution, as shown rigorously by
Bollob\'{a}s et al \cite{brst}.

One weakness of this model and its
generalisations is that this implicitly requires a calculation
across all the existing vertices, or at least a knowledge of the
total degree (sum of the vertex degrees) of the graph. This
requirement then destroys the potential for this model
to have emergent properties from local behaviour.

A possible solution to this was proposed by Saram\"{a}ki and Kaski
\cite{sarkaski}. In their model the new vertex simply chooses a single vertex
from the graph and then executes a random walk of length $\ell$ step
initiated from that vertex. Saram\"{a}ki and Kaski \cite{sarkaski} and Evans
and Saram\"{a}ki \cite{evans} claim that this reproduces the
Barab\'{a}si-Albert degree distribution, even when $\ell=1$.  It is
clear that this is the    case if the random walk is run for long
enough to have converged to its stationary distribution. However
we will prove that in the particular case $\ell=1$ the degree sequence does not converge to a power law distribution, but rather to a degenerate limiting distribution in which almost every
vertex has degree $1$.

\section{The Model}

Let $G_0$ be an arbitrary (perhaps connected) graph, with $v_0$ vertices and $e_0$ edges. Form $G_{n+1}$
from $G_n$ by adding a single vertex.  This vertex chooses a single
vertex (i.e. this corresponds to $m=1$ in the Barab\'{a}si-Albert model) to connect to by picking a vertex uniformly at
random in $G_n$ and then, conditional on the vertex chosen, performing a simple
random walk of length $\ell$ on $G_n$, starting from the randomly chosen vertex, and then choosing to connect to the destination vertex.  Most of the time we will assume that $\ell$ is deterministic, but we will also consider a particular case where $\ell$ is replaced by a random variable.

\section{Number of leaves}

We first consider the number of leaves in the graph.  Let $\hier[p]{n}{d}$ be the proportion of vertices in $G_n$ with
degree $d$, and let $L_n=\hier[p]{n}{1}$, i.e. the proportion of
leaves.  The number of edges in $G_n$ will be $n+e_0$, the total
degree will thus be $2(n+e_0)$, and the number of vertices will be
$n+v_0$.  Let $V_n$ be
the vertex initially chosen at random at step $n$, and let $W_n$ be
the vertex selected by the random walk, so the new vertex connects
to $W_n$. We now prove the main result, which applies to the case where $\ell=1$.

\begin{thm}\label{l1}When $\ell=1$, as $n\to\infty$, $L_n\to 1$, almost surely.\end{thm}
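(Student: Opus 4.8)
The plan is to push everything through the conditional law of $W_n$. Since $V_n$ is uniform on the $n+v_0$ vertices of $G_n$ and one step of simple random walk is then taken,
\[
\pr(W_n=u\mid G_n)=\frac{1}{n+v_0}\sum_{w\sim u}\frac{1}{\deg_n w},
\]
and because only $W_n$ gains an edge at step $n$ while the new vertex is itself a leaf, the number of leaves $\Lambda_n:=L_n(n+v_0)$ satisfies the exact recursion $\Lambda_{n+1}=\Lambda_n+1-\mathbf 1\{W_n\text{ is a leaf of }G_n\}$, with
\[
\pr(W_n\text{ is a leaf of }G_n\mid G_n)=\frac{1}{n+v_0}\sum_{u:\deg_n u=1}\frac{1}{\deg_n(w(u))},
\]
where $w(u)$ denotes the unique neighbour of a leaf $u$. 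First I would bound this probability: grouping leaves by their neighbour, the sum equals $\sum_w \ell_n(w)/\deg_n w$, where $\ell_n(w)$ is the number of leaf-neighbours of $w$; each term is at most $1$ and vanishes unless $w$ has a leaf-neighbour, so the sum is at most the number of vertices having a leaf-neighbour, which is at most $N_n+c$, where $N_n=(n+v_0)-\Lambda_n$ is the number of non-leaves and $c$ bounds the (non-increasing) number of $K_2$ components of $G_n$ (the only vertices with a leaf-neighbour that are themselves leaves). Substituting gives $\E[L_{n+1}\mid G_n]\ge L_n-c/\big((n+v_0)(n+v_0+1)\big)$, so $L_n$ plus the summable tail of this error is a bounded submartingale; hence $L_n$ converges almost surely, and being bounded also in $L^1$, to some $L_\infty\in[0,1]$ with $\E[L_\infty]=\lim_n\E[L_n]$.

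It then remains to prove $\E[L_n]\to 1$, equivalently $\E[N_n]=o(n)$, which together with $L_\infty\le 1$ forces $L_\infty=1$ almost surely. For this I would use a sharper version of the estimate above. Fixing $D\in\N$ and splitting the leaves according to whether their neighbour has degree more than $D$,
\[
\sum_{u:\deg_n u=1}\frac{1}{\deg_n(w(u))}\le \#\{u:\deg_n u=1,\ \deg_n(w(u))\le D\}+\frac{n+v_0}{D},
\]
and the first term is at most $D$ times the number $N_n^{\le D}$ of non-leaves of degree at most $D$, up to a bounded error. This yields $\E[N_{n+1}-N_n\mid G_n]\le \big(D\,N_n^{\le D}+O(1)\big)/(n+v_0)+1/D$, so that $\E[N_n]=o(n)$ follows (letting $D\to\infty$ afterwards) once one shows $\E[N_n^{\le D}]=o(n)$ for every fixed $D$.

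The heart of the matter, and the step I expect to be the main obstacle, is this last claim: for each fixed $D$ only a vanishing proportion of vertices are non-leaves of degree at most $D$. This is exactly where the model departs from preferential attachment: the weight $1/\deg_n w$ in the law of $W_n$ means a vertex is chosen with probability boosted by its number of leaf (more generally, low-degree) neighbours, so a vertex that has acquired leaf children is preferentially selected and quickly gains still more degree, whereas a leaf attached to a vertex of degree $d$ is selected only with probability of order $1/\big((n+v_0)d\big)$ and so, once $d$ is large, almost never loses leaf status. To make this quantitative I would track the counts $N_n^{(j)}$ of non-leaves of degree exactly $j$ for small $j$, inductively in $j$: such a vertex is created only when $W_n$ is a vertex of degree $j-1$, an event of small probability by the previous steps, while as long as it still has a leaf-neighbour that neighbour contributes a term of size at least $1/(n+v_0)$ to $\pr(W_n=\cdot\mid G_n)$, forcing it to gain degree relatively quickly; a supermartingale or stochastic-comparison argument along these lines should give $\E[N_n^{(j)}]=o(n)$. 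The delicate point is that both the creation and the removal rates involve the unknown correlation structure, so the argument must be arranged to avoid circularity — for instance by bootstrapping from the crude a priori bound $N_n=O(n)$ already supplied by the submartingale step — and it is here that I expect the real work to lie.
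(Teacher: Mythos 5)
Your first half is sound and is essentially the paper's own submartingale step in slightly different clothing: the identity $\Lambda_{n+1}=\Lambda_n+1-\mathbf 1\{W_n\text{ is a leaf}\}$, plus the observation that (apart from a bounded number of exceptional vertices coming from $K_2$/star components) every leaf's neighbour is a non-leaf, gives $\E[L_{n+1}\mid G_n]\ge L_n-O(n^{-2})$ and hence almost sure convergence of $L_n$ to some $L_\infty$ with $\E[L_\infty]=\lim_n\E[L_n]$. The arithmetic reducing $\E[L_n]\to 1$ to the claim $\E[N_n^{\le D}]=o(n)$ for every fixed $D$ is also correct.

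But the argument stops exactly where it needed to start. The claim $\E[N_n^{\le D}]=o(n)$ is left as a sketch, and it is not an easier statement than the theorem: already for $D=2$ your own bookkeeping shows that non-leaves of degree $2$ are created precisely on the event $\{W_n\text{ is a leaf}\}$, the very probability you are trying to control, and the proposed induction on $j$ with a supermartingale comparison is precisely the ``real work'' you defer; as written there is no proof. The paper closes the gap with a much shorter device that avoids estimating the degree distribution altogether: instead of bounding $\pr(W_n\text{ is a leaf}\mid G_n)$ from above, bound $\pr(W_n\text{ is a non-leaf}\mid G_n)$ from below. Once $G_n$ is not a star, every vertex has at least one non-leaf neighbour, so conditionally on $V_n=v$ the walk lands on a non-leaf with probability at least $1/\deg_n v$; and by Markov's inequality at least half of the $(1-L_n)(n+v_0)$ non-leaves have degree at most twice the average non-leaf degree, which is $\tfrac{2-L_n}{1-L_n}+o(1)$. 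Restricting $V_n$ to those vertices yields
\[
\E[L_{n+1}\mid G_n]\;\ge\; L_n+\frac{1}{n+v_0+1}\cdot\frac{(1-L_n)^2}{4(2-L_n)}+O(n^{-2}),
\]
a drift of order $n^{-1}$ times a quantity bounded away from zero on $\{L_n\le c\}$ for any fixed $c<1$. If $\E[L_\infty]<1$, then $\pr(L_n\le c)$ is bounded below for all large $n$ and some $c<1$, so $\E[L_{n+1}]-\E[L_n]\ge\varepsilon/(n+1)$ along a tail, contradicting $\E[L_n]\le 1$. You should replace the unproved degree-distribution estimate by an argument of this kind (or supply an actual proof of $\E[N_n^{\le D}]=o(n)$, which I do not see how to do without in effect reproving the theorem).
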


\begin{proof}We assume that $G_0$ is not a star.  If $G_0$ is a star,
then it is clear that, with probability $1$, $G_n$ will eventually
not be a star, so we can just wait until this happens and re-label
the first non-star graph as $G_0$.  If $G_n$ is not a star each vertex has at least one neighbour which is not a leaf,
and in particular no leaves have a leaf as their neighbour. If $V_n$ is a leaf,
which has probability $L_n$, then $W_n$ will be one of its
neighbours, which will not be a leaf, so in this case the number of
leaves increases by $1$.  Hence, considering the conditional expectation of the number of leaves in $G_{n+1}$,
\begin{equation}\label{submart}\E((n+v_0+1)L_{n+1}|G_n)\geq
(n+v_0)L_n+L_n=(n+v_0+1)L_n,\end{equation} and so $\E(L_{n+1}|G_n)\geq L_n$ and
so $(L_n)_{n \in \N}$ is a submartingale taking values in $[0,1]$,
and thus converges almost surely and in $\mathcal{L}^2$ to a limit, which we call
$L_{\infty}$.

To show that $L_{\infty}=1$ almost surely, note that conditional on $V_n$ having
degree $d$ the probability of $W_n$ not being a leaf is at
least $1/d$, so we can make \eqref{submart} sharper, getting
\begin{equation}\E(L_{n+1}|G_n) \geq
L_n+\sum_{d=2}^{\infty}\frac{\hier[p]{n}{d}}{(n+v_0+1)d}.\end{equation}

The total degree of non-leaves in $G_n$ is $2(n+e_0)-L_n(n+v_0)=(2-L_n)(n+v_0)+2(e_0-v_0)$, and the
number of non-leaves is $(1-L_n)(n+v_0)$, so the average degree of
non-leaves is $\frac{2-L_n}{1-L_n}+\frac{2(e_0-v_0)}{(n+v_0)(1-L_n)}$.  Hence at least half the non-leaves have degree at most
$2\left(\frac{2-L_n}{1-L_n}+\frac{2(e_0-v_0)}{(n+v_0)(1-L_n)}\right)$ and so
\begin{equation}\E(L_{n+1}|G_n) \geq
L_n+\frac{1-L_n}{2(n+1)}\left(2\left(\frac{2-L_n}{1-L_n}+\frac{2(e_0-v_0)}{(n+v_0)(1-L_n)}\right)\right)^{-1}\end{equation} and so \begin{equation}\label{expiter}\E(L_{n+1}) \geq
\E(L_n)+\frac{1}{2(n+1)}\E\left(\frac{1-L_n}{2}\left(\frac{2-L_n}{1-L_n}+\frac{2(e_0-v_0)}{(n+v_0)(1-L_n)}\right)^{-1}\right).\end{equation}

If $\E(L_{\infty})=\lim_{n\to\infty}\E(L_n)<1$, then for some fixed $c<1$ we must have $L_{n}\leq c$ with positive probability.  The expectation on the right of \eqref{expiter} is then bounded away from zero for large $n$, giving a contradiction and showing that $\E(L_{\infty})=1$ and thus that
 $L_{\infty}=1$ almost surely.\end{proof}

It should be noted that the argument for Theorem \ref{l1} is dependent on the walk length being fixed at $1$.  For example, define a sequence of random variables $(X_n)_{n\in\N}$ which are independent and identically distributed with $P(X_n=0)=p$ and $P(X_n=1)=1-p$, and let the walk length from $V_n$ to $W_n$ be $X_n$, rather than a fixed $\ell$ as previously.

Then, by the same argument as before
$$\E(L_{n+1}-L_n|G_n,X_{n+1}=1)\geq \frac{1-L_n}{2}\frac{1-L_n}{2(n+v_0+1)(2-L_n)}+O(n^{-2}).$$  As there can be at
most one more leaf in $G_{n+1}$ than in $G_n$, we also have  $$\E(L_{n+1}-L_n|G_n,X_{n+1}=1)\leq
\frac{1-L_n}{n+v_0+1}+O(n^{-2}).$$

Also, if there are no random walk steps from the initially chosen vertex the probability that the new vertex connects to
a leaf is simply $L_n$, so $$\E((n+v_0+1)L_{n+1}|G_n,X_{n+1}=0)=(n+v_0)L_n+1-L_n,$$ and hence
$$\E(L_{n+1}-L_n|G_n,X_{n+1}=0)=\frac{1}{n+v_0+1}(1-2L_n).$$

So, if we have $X_{n}=0$ with probability $p$ and $1$ with probability $1-p$ for all $n$ independently of each other \begin{equation}\label{ub}\E(L_{n+1}-L_n|G_n)\geq
\frac{1}{n+v_0+1}\left[p(1-2\lambda)+(1-p)\frac{(1-\lambda)^2}{4(2-\lambda)}\right]+O(n^{-2}).\end{equation}  Similarly, \begin{equation}\label{lb}\E(L_{n+1}-L_n|G_n)\leq
\frac{1}{n+v_0+1}\left[1-\lambda(1+p)\right]+O(n^{-2}).\end{equation}

The right hand side of \eqref{ub} is negative if $$L_n<\frac{1+9p-2\sqrt{8p^2+p}}{1+7p}$$ and $n$ is sufficiently large and the right hand side of
\eqref{lb} is negative if $L_n>\frac{1}{1+p}$ and $n$ is sufficiently large.

Note that $$\frac{1+9p-2\sqrt{8p^2+p}}{1+7p} -\frac{1}{1+p} \ge 0$$ for $p
\in [0,1]$ with equality only at $p=0$ and $p=1$, and that $$\frac{1+9p-2\sqrt{8p^2+p}}{1+7p}\leq 1,$$ with equality only if $p=0$.

A version of the argument of Lemma 2.6 of \cite{pemantlesurvey} now shows that, almost surely, $$\liminf_{n\to\infty}L_n\geq \frac{1}{1+p}$$ and $$\limsup_{n\to\infty}L_n\leq \frac{1+9p-2\sqrt{8p^2+p}}{1+7p}.$$  So we do not get a similar result to Theorem \ref{l1} in this setting.

\section{$G_0$ Bipartite}

We now consider a special case which demonstrates that, for all odd
$\ell$, the random walk model of \cite{sarkaski} differs
fundamentally from that of the Barab\'{a}si-Albert model.

Assume that $G_0$ is a bipartite graph, with the two parts coloured
as red and blue. Then, in both models, for all $n$ the graph $G_n$ will be
bipartite, and the parts can be coloured red and blue consistently
for each $n$. Let the proportion of red vertices in $G_n$ be $R_n$.
We begin with the random walk model.

\begin{thm}
We have $R_{\infty}$ such that $R_n$ converges almost surely to
$R_{\infty}$. If $\ell$ is even, then $R_{\infty}=\half$, almost
surely, while if $\ell$ is odd $R_{\infty}$ is a random variable
with a Beta distribution.
\end{thm}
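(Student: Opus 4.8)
The plan is to exploit bipartiteness to collapse the random walk dynamics onto a pure colour-urn process, and then to treat the two parities separately. We may assume $G_0$ is connected with at least one edge, so that no vertex of any $G_n$ is ever isolated and the random walk is always well defined; the general case is handled similarly. The key observation is that in the bipartite graph $G_n$ every edge joins a red vertex to a blue one, so a simple random walk started at $V_n$ visits vertices of strictly alternating colour. Hence $W_n$ has the \emph{same} colour as $V_n$ when $\ell$ is even and the \emph{opposite} colour when $\ell$ is odd, regardless of the detailed geometry of the walk. Since the new vertex at step $n$ takes the colour opposite to $W_n$, and $V_n$ is uniform among the $n+v_0$ vertices of $G_n$, this gives $\pr(\text{new vertex is red}\mid G_n)=1-R_n$ when $\ell$ is even and $\pr(\text{new vertex is red}\mid G_n)=R_n$ when $\ell$ is odd.

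For $\ell$ odd, write $r_0,b_0$ for the numbers of red and blue vertices of $G_0$, so $r_0+b_0=v_0$. From $\E((n+v_0+1)R_{n+1}\mid G_n)=(n+v_0)R_n+R_n$ we see that $(R_n)$ is a bounded martingale, hence converges almost surely (and in $\mathcal{L}^2$) to a limit $R_\infty$. The sequence of colours added is exactly a two-colour P\'olya urn started from $r_0$ red and $b_0$ blue balls, and so by the classical P\'olya urn limit theorem $R_\infty$ has the $\mathrm{Beta}(r_0,b_0)$ distribution; the cleanest route to this is to note that the colour sequence is exchangeable and apply de Finetti's theorem, identifying the mixing measure from the probability of an arbitrary colour string. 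Alternatively one can compute the rising-factorial moments of $(n+v_0)R_n$ directly from the martingale recursion and match them with those of the Beta law.

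For $\ell$ even, set $D_n=R_n-\half$. Using $\pr(\text{new vertex red}\mid G_n)=1-R_n$ one computes $\E(D_{n+1}\mid G_n)=a_nD_n$ with $a_n=\frac{n+v_0-1}{n+v_0+1}<1$, and $\Var(D_{n+1}\mid G_n)=\frac{R_n(1-R_n)}{(n+v_0+1)^2}\le\frac14(n+v_0+1)^{-2}$. Taking expectations gives $\E(D_{n+1}^2)\le a_n^2\,\E(D_n^2)+\frac14(n+v_0+1)^{-2}$, and since $a_n^2\le 1-c/n$ for large $n$ this recursion forces $\E(D_n^2)\to 0$, so $R_n\to\half$ in $\mathcal{L}^2$. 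To upgrade this to almost sure convergence, put $s_n=\sum_{k\ge n}\frac14(k+v_0+1)^{-2}$; then $U_n=D_n^2+s_n$ is a non-negative supermartingale (here we only use $a_n^2\le 1$), hence converges almost surely, and since $\E(U_n)\to 0$, Fatou's lemma forces its limit to be $0$. As $s_n\to 0$ this yields $D_n\to 0$, i.e. $R_n\to\half$ almost surely.

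The step I expect to require the most care is precisely this last upgrade from $\mathcal{L}^2$ to almost-sure convergence in the even case: the natural martingale $D_n\prod_{k<n}a_k^{-1}$ grows like $n^2$ whereas $\E(D_n^2)$ only decays like $1/n$, so that martingale is not $\mathcal{L}^2$-bounded and does not directly give almost-sure convergence, and one genuinely needs the Robbins--Siegmund-type supermartingale $U_n$ above (which is in the spirit of the stochastic approximation arguments already invoked earlier in the paper). Everything else is routine: the parity dichotomy makes the walk length irrelevant beyond its parity, and the bounded correction terms coming from $e_0$ and $v_0$ do not affect any of the limits.
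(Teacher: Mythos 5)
Your proof is correct, and the key structural observation is identical to the paper's: bipartiteness forces $W_n$ to have the same colour as $V_n$ for even $\ell$ and the opposite colour for odd $\ell$, so the colour of the new vertex depends only on the parity of $\ell$ and the colour of the uniformly chosen $V_n$. For odd $\ell$ you and the paper do the same thing: identify the colour process as a standard P\'olya urn and invoke the classical Beta limit (the paper cites Theorem 2.1 of Pemantle's survey; your de Finetti/moment remarks are just standard proofs of that result, and your identification of the parameters as $(r_0,b_0)$ is a slightly more precise statement than the paper's ``parameters depending on $G_0$''). Where you genuinely diverge is the even case: the paper recognises the process as a Friedman urn with $\alpha=0$, $\beta=1$ and cites Freedman's theorem for $R_n\to\half$, whereas you give a self-contained argument via the exact contraction $\E(D_{n+1}\mid G_n)=\frac{n+v_0-1}{n+v_0+1}D_n$, an $\mathcal{L}^2$ recursion showing $\E(D_n^2)\to 0$, and the Robbins--Siegmund-type supermartingale $U_n=D_n^2+s_n$ to upgrade to almost sure convergence. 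I checked the algebra: $\E((n+v_0+1)R_{n+1}\mid G_n)=(n+v_0-1)R_n+1$ does give your $a_n$, the conditional variance bound is right, and the supermartingale inequality $\E(U_{n+1}\mid G_n)\le a_n^2D_n^2+(s_n-s_{n+1})+s_{n+1}\le U_n$ closes correctly, with Fatou forcing the limit to vanish. Your version costs a page of computation but buys independence from urn-theoretic citations and makes explicit why the naive compensated martingale fails; the paper's version is shorter but leans on Freedman's result as a black box. Both are complete proofs.
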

\begin{proof}
Conditional on $G_n$, $V_n$ will be red with probability $R_n$. If
$\ell$ is odd $W_n$ will be of opposite colour to $V_n$, which
implies that the new vertex (which connects to $W_n$) will be of the
same colour as $V_n$, and thus, conditional on $G_n$, will be red
with probability $R_n$ and blue with probability $1-R_n$.  Hence in
this case the colours of vertices are equivalent to the colours of
the balls in a standard P\'{o}lya urn (where when a ball is drawn
two of the same colour are returned), and so by classical results on the P\'{o}lya urn (see, for example, Theorem 2.1 in \cite{pemantlesurvey}) $R_n$ converges almost
surely to $R_{\infty}$ where $R_{\infty}$ has a Beta distribution
whose parameters depend on $G_0$.

If $\ell$ is even then $W_n$ is of the same colour as $V_n$ and so
the new vertex is of opposite colour to $V_n$.  Hence this case
corresponds to a two-colour generalised P\'{o}lya urn where a ball
is selected and a ball of the opposite colour is added, namely a
Friedman urn with $\alpha=0$ and $\beta=1$.  In this case
$R_n\to\half$ almost surely; see for example Freedman \cite{freedman}, and Theorem 2.2 in \cite{pemantlesurvey}.
\end{proof}

\begin{thm} In the Barab\'{a}si-Albert model $R_{\infty}=\half$
almost surely.
\end{thm}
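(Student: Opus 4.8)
The plan is to analyze how the colour of the newly added vertex is determined in the Barab\'{a}si--Albert model, where the new vertex connects to an existing vertex chosen with probability proportional to its degree. Since $G_n$ is bipartite with red part and blue part, the probability that the new vertex attaches to a red vertex equals the total degree of the red vertices divided by the total degree $2(n+e_0)$; and because every edge has exactly one red and one blue endpoint, the total degree of the red vertices equals the number of edges, which equals the total degree of the blue vertices. Hence the new vertex attaches to a red vertex with probability exactly $\half$, independently of $R_n$ and of the past, and so it becomes blue with probability $\half$ and red with probability $\half$.

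From here the argument is a routine law-of-large-numbers / martingale computation. Let $N^{\mathrm{red}}_n$ be the number of red vertices in $G_n$, so $R_n = N^{\mathrm{red}}_n/(n+v_0)$. By the previous paragraph, conditional on $G_n$, $N^{\mathrm{red}}_{n+1} - N^{\mathrm{red}}_n$ is a Bernoulli$(\half)$ random variable independent of $G_n$. Thus $N^{\mathrm{red}}_n - N^{\mathrm{red}}_0$ is a sum of $n$ i.i.d.\ Bernoulli$(\half)$ variables, and by the strong law of large numbers $N^{\mathrm{red}}_n/n \to \half$ almost surely, whence $R_n \to \half$ almost surely. (Equivalently, one checks that $R_n$ is a bounded submartingale-type quantity; but the cleanest route is simply to observe the colour increments form an i.i.d.\ fair-coin sequence.)

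The one point requiring a little care, and the only real obstacle, is the observation that the probability of attaching to a red vertex is exactly $\half$ and, crucially, does \emph{not} depend on the current configuration. This is where bipartiteness is essential: the sum of red degrees equals the number of edges precisely because each edge is red--blue, and this is what decouples the colour evolution from $R_n$. Once that is in hand, there is no Beta-distributed limit as in the random walk case with $\ell$ odd, because the attachment here is driven by degree (which is colour-balanced by the edge count) rather than by a uniformly chosen starting vertex (whose colour is $R_n$-biased). I would state the exactness of the $\half$ as a one-line lemma inside the proof and then invoke the strong law of large numbers to conclude.
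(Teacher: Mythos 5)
Your proposal is correct and is essentially the paper's own argument: the paper phrases the key fact via uniformly chosen coloured half-edges (each edge contributing one red and one blue half), which is exactly your observation that the sum of red degrees equals the number of edges, so the attachment probability is exactly $\half$ at every step. If anything, your version spells out the final step (i.i.d.\ Bernoulli$(\half)$ increments plus the strong law) slightly more explicitly than the paper does.
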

\begin{proof}
In this model it is possible to associate the selection of a vertex
with an urn model by considering half-edges, and giving each
half-edge the colour of its associated vertex, i.e. each edge is
split into a red half and a blue half. The selection of a vertex
with probability proportional to its degree is then equivalent to
selecting a half-edge uniformly at random and then selecting the
associated vertex.  As the new edge added in $G_{n+1}$ will always
consist of a blue half and a red half, the proportion of red
half-edges must converge to $\half$, and as a red vertex is added if
and only if a blue vertex is selected, the proportion of red
vertices will converge to $\half$, almost surely.
\end{proof}

So in this respect the behaviour of the random walk model is
different from the Barab\'{a}si-Albert model when $\ell$ is odd,
regardless of the size of $\ell$.

\section{Discussion}

We have demonstrated that the model of Saram\"{a}ki and Kaski is
fundamentally different from that of Barab\'{a}si and Albert, unless
we allow an indefinite length for the random walk component. It does
have the advantage of not requiring a global calculation, retaining
the local behaviour characteristic which is desirable in models of
emergent behaviour. An alternate approach might be to imagine that the
addition of edges is affected by the vertices in $G_n$, rather than
by the new vertex. Thus each vertex in $G_n$ could link to a new
vertex as it arises with probability proportional to its degree,
independently of all other vertices, as in the variant of preferential attachment studied by Dereich and M\"{o}rters \cite{dm1,dm2}. This, of course, destroys one of
the usual assumptions of the preferential attachment model that the
number of new links is some fixed value $m$, though we could
substitute the condition that the average number added was fixed.

The urn model approach is interesting particularly since there is
much known about these (see for example the survey paper by Pemantle \cite{pemantlesurvey}). We might
generalise the model to consider directed graphs where there are $k$
colours $c_i;~~ i=0,k-1$, with directed edges only between a vertex
of colour $c_i$ and one of colour $c_{(i+1)(\mathrm{mod}~k)}$. When a new
vertex is added it links at random to a vertex and then takes $\ell$
random steps along directed edges, its colour then being determined.
The case $\ell \ne 0(\mathrm{mod}~k)$ will have the proportions of each
colour converging to $1/k$, whereas for $\ell = 0(\mathrm{mod}~k)$ there will
be a Dirichlet distribution with parameters depending on $G_0$.

\section{Acknowledgement}

The first author acknowledges support from the European Union
through funding under FP7-ICT-2011-8 project HIERATIC (316705).

\end{document}